\theoremstyle{plain} 
\newtheorem{thm}{Theorem}
\newtheorem{prop}[thm]{Proposition}
\theoremstyle{definition}
\newtheorem{conj}{Conjecture}
\theoremstyle{remark}
\newtheorem{rem}[thm]{Remark}
\def\RR{\mathbb{R}}
\begin{document}

\title{A note on the affine-invariant plank problem}

\author{Gregory R. Chambers}
\address{Department of Mathematics, Rice University, Houston, TX}
\email{gchambers@rice.edu}

\author{Lawrence Mouill\'e}
\address{Department of Mathematics, Trinity University, San Antonio, TX}
\email{lmouille@trinity.edu}

\begin{abstract}
Suppose that $C$ is a bounded, convex subset of $\RR^n$, and that $P_1, \dots, P_k$ are planks
which cover $C$ in respective directions $v_1, \dots, v_k$ and with widths
$w_1, \dots, w_k$.  In 1951, Bang conjectured that the sum of relative widths
$$\sum_{i=1}^k \frac{w_i}{w_{v_i}(C)} \geq 1, $$
generalizing a previous conjecture of Tarski.
Here, $w_{v_i}(C)$ is the width of $C$ in the direction $v_i$.

In this note we give a short proof of this conjecture under the assumption that, for every $m$ with $1 \leq m \leq k$,
$ C \setminus \bigcup_{i = 1}^m P_i $
is a convex set.  
In addition, we prove that if the projection of $C$ onto the vector space spanned by the normal vectors of the planks has dimension $d$, then the above sum of relative widths is at least $1/d$.
\end{abstract}
\maketitle


A \emph{plank} $P$ is the closed, connected region between two parallel hyperplanes in $\RR^n$.
The distance between these supporting hyperplanes is called the \emph{width} of $P$.  
We define the \emph{direction} of $P$ to be the one-dimensional subspace $v$ of $\RR^n$ which is perpendicular to the supporting hyperplanes of $P$.
In 1932, Tarski proved in \cite{tarski} that if $P_1, \dots, P_k$ are planks with widths $w_1, \dots, w_k$ which cover a unit ball in $\RR^2$,
then
$$ \sum_{i=1}^k w_i \geq 2. $$

He asked if an analogous result was true for every bounded, convex subset $C$ of $\RR^n$.  For any one-dimensional subspace $v$ of $\RR^n$, we define the width of $C$ in the direction $v$,
$w_v(C)$, as the width of the smallest plank of direction $v$ which covers $C$.  In other words, if we choose hyperplanes perpendicular to $v$, which do not intersect the interior of $C$,
and which are minimally separated, then $w_v(C)$ is the distance between those hyperplanes.  Tarski's question can be thus stated as follows:  If $P_1, \dots, P_k$ are planks which
cover $C$ with widths $w_1, \dots, w_k$, then does
$$ \sum_{i=1}^k w_i \geq \inf_{v \in G(n,1)} w_v(C) $$
hold? Here, $G(n,1)$ is the Grassmannian of one-dimensional linear subspaces in $\RR^n$.

This conjecture was resolved in 1951 by Bang in \cite{bang}.  
In the same article, Bang posed the following well-known affine-invariant generalization of Tarski's problem:

\begin{conj}[Bang \cite{bang}]
	\label{conj:affine_invariant}
	Suppose that $C$ is a convex, bounded subset of $\RR^n$, and that $P_1, \dots, P_k$ are planks in directions $v_1, \dots, v_k$ and of widths $w_1, \dots, w_k$
	which cover $C$.  Then
	$$ \sum_{i=1}^k \frac{w_i}{w_{v_i}(C)} \geq 1. $$
\end{conj}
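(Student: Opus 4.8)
The plan is to argue by contradiction, in the spirit of Bang's resolution of Tarski's problem: assuming $\sum_{i=1}^k w_i / w_{v_i}(C) < 1$, I would produce a point of $C$ lying outside every plank, contradicting the covering hypothesis. Write each plank as $P_i = \{x : |\langle u_i, x\rangle - c_i| \le w_i/2\}$, where $u_i$ is a unit vector spanning $v_i$; then $w_i / w_{v_i}(C)$ is the width of $P_i$ measured intrinsically against $C$, and the whole inequality is invariant under the affine group. By this invariance I may place $C$ in any convenient position, but the feature that separates this from Tarski's problem is that no single affine map normalizes the widths $w_{v_i}(C)$ in all of the directions at once unless they coincide or $C$ is an ellipsoid, so the intrinsic widths in the several directions must be reconciled simultaneously.

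The escape point would be built by an inductive sign-selection. Starting from an interior base point $x_0$, I would process the planks one at a time, and at the $i$-th stage displace the running point by $\epsilon_i \lambda_i u_i$ with $\epsilon_i \in \{+1,-1\}$, choosing the sign so as to push the point to the far side of the central hyperplane $\{\langle u_i, \cdot\rangle = c_i\}$; taking $\lambda_i$ proportional to $w_{v_i}(C)$ then forces the terminal point to clear the slab $P_i$ by more than $w_i/2$ in the direction $u_i$. The combinatorial core of this step is a one-dimensional selection principle of Klein's type, guaranteeing that the signs can be fixed consistently for all $i$ together; I would establish that principle and lift it, so that the chosen signs realize a candidate escape point $x^* = x_0 + \sum_{i=1}^k \epsilon_i \lambda_i u_i$ that avoids all the planks precisely because $\sum_i w_i / w_{v_i}(C) < 1$.

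The main obstacle is showing that $x^*$ still lies in $C$. In Tarski's setting the minimal width supplies a single scale that controls every admissible displacement, whereas here the displacement $\sum_i \epsilon_i \lambda_i u_i$ mixes non-orthogonal directions, and the hypothesis $\sum_i w_i / w_{v_i}(C) < 1$ bounds only the one-dimensional widths, not the cross terms that can carry $x^*$ out of $C$. I would try to absorb these cross terms by a peeling induction: order the planks so that at each stage the chosen one is anchored to an extreme face of the current body in its own direction, delete it, bound the resulting loss of intrinsic width by $w_i$, and telescope the contributions to $1$. The genuine difficulty — and the reason the conjecture is hard in full generality — is that deleting a plank from a convex body leaves a region that is typically non-convex, so neither the anchoring step nor the additive width bookkeeping is available; finding a substitute for convexity that still forces the displacement to respect $C$ is the crux on which the general case turns.
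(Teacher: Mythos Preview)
Your proposal is not a proof, and you correctly diagnose why: the sign-selection scheme in your second paragraph produces a candidate point $x^*$, but nothing in the hypothesis $\sum_i w_i/w_{v_i}(C)<1$ controls the mixed-direction displacement well enough to keep $x^*$ inside $C$; and the peeling induction in your third paragraph requires that $C$ minus a plank remain convex, which it generally does not.  You end by naming this as ``the crux on which the general case turns'' rather than resolving it, so what you have written is an outline of obstacles, not an argument.

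It is worth stressing that the paper does not prove this statement either.  Conjecture~1 is Bang's affine-invariant plank problem, still open in full generality; the paper establishes only Theorem~1, which is the conjecture under the additional hypothesis that $C\setminus\bigcup_{i=1}^m P_i$ is convex for every $m$.  Your peeling idea is in fact exactly the mechanism the paper uses: Proposition~2 shows that if $C\setminus P_m$ is convex, one can anchor a dilation of $C$ at a boundary point on the far supporting hyperplane, scale by $\rho=w_{v_1}(X)/w_{v_1}(C)$, and obtain $w_v(X)/w_v(C)\ge\rho$ for every direction $v$, which is precisely the ``additive width bookkeeping'' you describe.  The paper does not attempt to find a substitute for convexity; it simply assumes it and iterates.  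Your Bang-type sign-selection approach is not pursued in the paper at all, and the difficulty you identify with it --- that intrinsic widths in different directions cannot be simultaneously normalized --- is indeed the reason that line of attack has not settled the conjecture.
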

If $w_{v_i}(C) = 0$ for some $i$, then we define the corresponding ratio as $+ \infty$, and so the inequality is trivially true.

This conjecture is known to be true for $n=2$ \cite{BangDim2} or when the set $C$ is centrally symmetric \cite{ball}, and a strengthened version of the conjecture has recently been proven for dimensions $n \leq 14$ \cite{Pinasco}.
For a survey of related results, as well as other variations of this problem,
see \cite{Bezdek2013}.
In this note, we prove Conjecture \ref*{conj:affine_invariant} under an additional assumption:
\begin{thm}
	\label{thm:main}
	Suppose that $C$ is a convex, bounded subset of $\RR^n$, and that $P_1, \dots, P_k$ are planks which cover $C$.  Furthermore, assume that
	$$ C \setminus \bigcup_{i=1}^m P_i $$
	is convex for every $m$ with $1 \leq m \leq k$.  Then Conjecture \ref*{conj:affine_invariant} is true for $C$ and $P_1, \dots, P_k$.
\end{thm}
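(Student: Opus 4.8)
The plan is to induct on the number $k$ of planks, using the hypothesis that every partial removal $C\setminus\bigcup_{i\le m}P_i$ stays convex to show that deleting one plank behaves exactly like slicing $C$ with a single hyperplane. We may assume $w_{v_i}(C)>0$ for all $i$, since otherwise the corresponding ratio is $+\infty$ and the inequality is trivial; in particular $C\neq\emptyset$. The base case $k=1$ is immediate: the single plank $P_1$ must cover $C$, so $w_1\ge w_{v_1}(C)$ by the definition of $w_{v_1}(C)$ as the least width of a plank in direction $v_1$ that covers $C$.

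Now let $k\ge 2$ and assume the theorem for fewer planks. Write $P_1=\{x:a\le\langle x,v_1\rangle\le b\}$ for a unit vector $v_1$ with $b-a=w_1$, and set $C_1=C\setminus P_1$, which is convex by hypothesis. The structural observation is that $C_1$ cannot meet both of the open half-spaces $\{\langle x,v_1\rangle<a\}$ and $\{\langle x,v_1\rangle>b\}$: the segment joining two such points lies in the convex set $C$ but crosses $P_1$, contradicting convexity of $C_1$. Hence, after possibly replacing $v_1$ by $-v_1$, we have $C_1\subseteq\{\langle x,v_1\rangle<a\}$; thus every point of $C$ with $\langle x,v_1\rangle>a$ lies in $P_1$, which forces $C\subseteq\{\langle x,v_1\rangle\le b\}$ and $C_1=C\cap\{\langle x,v_1\rangle<a\}$. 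Write $\alpha=\inf_{x\in C}\langle x,v_1\rangle$ and $\beta=\sup_{x\in C}\langle x,v_1\rangle$, so $w_{v_1}(C)=\beta-\alpha>0$. If $C_1=\emptyset$ then $C\subseteq P_1$ and we conclude as in the base case; if $C_1=C$ then $C\cap P_1=\emptyset$, and we simply discard $P_1$ and apply the inductive hypothesis to $C$ with $P_2,\dots,P_k$. In the remaining case $C_1\neq\emptyset$ forces $\alpha<a$ and $C_1\neq C$ forces $a\le\beta$, so we may put $t=\frac{a-\alpha}{\beta-\alpha}\in(0,1]$.

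Two estimates then finish the induction. First, $\beta\le b=a+w_1$ gives $w_1\ge\beta-a=(1-t)(\beta-\alpha)$, that is, $w_1/w_{v_1}(C)\ge 1-t$. Second, for every $t'\in(0,t)$ one checks $\alpha<\frac{a-t'\beta}{1-t'}$, so we may pick $q\in C$ with $\langle q,v_1\rangle<\frac{a-t'\beta}{1-t'}$; then the scaled copy $K'=(1-t')q+t'C$ of $C$ lies in $C$ by convexity, and every $y\in K'$ satisfies $\langle y,v_1\rangle<(1-t')\frac{a-t'\beta}{1-t'}+t'\beta=a$, so $K'\subseteq C_1$ and hence $w_v(C_1)\ge w_v(K')=t'\,w_v(C)$ for \emph{every} direction $v$; letting $t'\uparrow t$ gives $w_v(C_1)\ge t\,w_v(C)$ for all $v$. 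Finally $C_1$ is bounded and convex, is covered by $P_2,\dots,P_k$, and satisfies $C_1\setminus\bigcup_{i=2}^m P_i=C\setminus\bigcup_{i=1}^m P_i$, which is convex for every $m$; so the inductive hypothesis yields $\sum_{i=2}^k w_i/w_{v_i}(C_1)\ge 1$, and then $w_{v_i}(C_1)\ge t\,w_{v_i}(C)$ gives $\sum_{i=2}^k w_i/w_{v_i}(C)\ge t$. Adding this to $w_1/w_{v_1}(C)\ge 1-t$ produces $\sum_{i=1}^k w_i/w_{v_i}(C)\ge 1$.

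\textbf{Main obstacle.} The concluding arithmetic is forced — it collapses to $(1-t)+t=1$ — so essentially all of the content lies in the structural step: proving that $C\setminus P_1$ must lie on one side of $P_1$, and then isolating a \emph{single} ratio $t$ that simultaneously bounds $w_1$ from below and controls the proportional shrinkage $w_v(C\setminus P_1)\ge t\,w_v(C)$ \emph{for every direction $v$ at once}. This last uniformity is the real point: it is exactly what turns the inductive bound on $C_1=C\setminus P_1$ — which a priori only involves the smaller widths $w_{v_i}(C_1)$ — into a bound phrased in terms of the original widths $w_{v_i}(C)$, a conversion a naive induction cannot carry out and the place where the convexity hypothesis does its work.
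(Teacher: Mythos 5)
Your proof is correct and follows essentially the same route as the paper: peel off one plank, show $C\setminus P_1$ contains a homothetic (dilated) copy of $C$ with ratio $t = w_{v_1}(C\setminus P_1)/w_{v_1}(C)$, deduce the uniform bound $w_v(C\setminus P_1)\ge t\,w_v(C)$ for all directions $v$, and combine with $w_1/w_{v_1}(C)\ge 1-t$ by induction. The only cosmetic difference is that you dilate about an approximately extreme point $q$ and let $t'\uparrow t$, whereas the paper dilates about a boundary point on the supporting hyperplane and works with closures.
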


Now, recall that the dimension of a convex set $C\subseteq \mathbb{R}^n$ is defined to be the dimension of the unique affine subspace of $\mathbb{R}^n$ containing $C$ in which $C$ has non-empty interior.
We also prove the following theorem, which gives a lower bound on the sum of relative widths that is not sharp, but does hold for general configurations of planks. 


\begin{thm}
	\label{thm:main2}
	Suppose that $C$ is a convex, bounded subset of $\RR^n$, and that $P_1, \dots, P_k$ are planks in directions $v_1, \dots, v_k$ and of widths $w_1, \dots, w_k$
	which cover $C$. 
	Define $V$ to be the span of the directions $v_1,\dots,v_k$, and define $\pi_V(C)$ to be the orthogonal projection of $C$ onto $V$. 
	Then
	\[
		\sum_{i=1}^k \frac{w_i}{w_{v_i}(C)} \geq \frac{1}{\dim \pi_V(C)}.
	\]
\end{thm}

Theorem~\ref*{thm:main2} is considered a folklore result within the community, though the authors could not find it in the literature.
See Remark~\ref{rem:improvement} below for a further refinement of Theorem~\ref*{thm:main2}.

\section{Proof of Theorem~\ref{thm:main}}

The proof of Theorem~\ref{thm:main} follows immediately from repeatedly applying the following proposition.  We note that the
idea of dilating a convex set about a point on the boundary used here was also used by Bang in \cite{bang} and Alexander in \cite{alexander} to prove different results.
\begin{prop}
	\label{prop:removal}
	Suppose that $C$ is a bounded, convex subset of $\RR^n$, and that $P_1, \dots, P_k$ are planks which cover $C$.  
	Furthermore, suppose that $P_1$ has the
	property that $C \setminus P_1$ is a convex set.  
	If Conjecture \ref{conj:affine_invariant} is true for $C \setminus P_1$ covered by $P_2, \dots,  P_k$, then it is true for $C$ covered by $P_1, P_2, \dots, P_k$.
\end{prop}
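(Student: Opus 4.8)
The plan is to derive the inequality for $C$ from the assumed inequality for $C':=C\setminus P_m$ by comparing the widths $w_{v_i}(\cdot)$ of the two sets. Write $w:=w_m$ and $v:=v_m$, and choose coordinates so that $v$ is the first coordinate axis; put $a:=\inf\{x_1:x\in C\}$ and $b:=\sup\{x_1:x\in C\}$, so that $w_v(C)=b-a$. We may assume $w_{v_i}(C)>0$ for every $i$ (otherwise the claimed inequality is trivial) and $w<w_v(C)$ (otherwise the single term $w_m/w_v(C)$ is already $\geq 1$).

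First I would put $P_m$ into a normal form. Since $P_m$ is the slab between two hyperplanes $\{x_1=\alpha\}$ and $\{x_1=\beta\}$ with $\beta-\alpha=w$, the set $C\setminus P_m$ is the union of the two convex sets $C\cap\{x_1<\alpha\}$ and $C\cap\{x_1>\beta\}$, which lie in disjoint open half-spaces; for their union to be convex, one of them must be empty. After reflecting if necessary, assume $C\cap\{x_1<\alpha\}=\emptyset$, i.e.\ $\alpha\le a$. Replacing $P_m$ by the (possibly narrower) plank $\{a\le x_1\le\beta\}$ leaves $C\setminus P_m$ and the covering property unchanged while only decreasing $w$, so it suffices to prove the bound in that case. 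Hence we may assume $P_m=\{a\le x_1\le c\}$ with $c:=a+w<b$, and then $C'=C\cap\{x_1>c\}$ is a nonempty bounded convex set covered by $\{P_i\}_{i\ne m}$.

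The main step is a dilation about a boundary point, in the spirit of \cite{alexander}. Let $t:=1-\dfrac{w}{w_v(C)}=\dfrac{b-c}{b-a}\in(0,1]$, fix $p\in\overline{C}$ with $p_1=b$, and let $D(x):=p+t(x-p)$. For $x\in\overline{C}$ the point $D(x)$ lies on the segment $[p,x]\subseteq\overline{C}$, and its first coordinate equals $(1-t)b+tx_1\ge(1-t)b+ta=c$; therefore $D(\overline{C})\subseteq\overline{C}\cap\{x_1\ge c\}=\overline{C'}$, the final equality being a routine consequence of convexity and $c<b$. Since $w_v(\cdot)$ is monotone under inclusion, invariant under translations and under passing to the closure, and homogeneous of degree one under dilations, it follows that
$$w_{v_i}(C')=w_{v_i}(\overline{C'})\ \ge\ w_{v_i}\big(D(\overline{C})\big)=t\,w_{v_i}(C)\qquad(1\le i\le k),$$
and in particular each $w_{v_i}(C')>0$.

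Finally, for $i\ne m$ the previous display gives $\dfrac{w_i}{w_{v_i}(C)}\ge t\,\dfrac{w_i}{w_{v_i}(C')}$, so applying the assumed validity of Conjecture~\ref*{conj:affine_invariant} to $C'$ and $\{P_i\}_{i\ne m}$ yields
$$\sum_{i\ne m}\frac{w_i}{w_{v_i}(C)}\ \ge\ t\sum_{i\ne m}\frac{w_i}{w_{v_i}(C')}\ \ge\ t.$$
Adding the $i=m$ term, which is exactly $\dfrac{w}{w_v(C)}=1-t$, we obtain $\sum_{i=1}^{k}\dfrac{w_i}{w_{v_i}(C)}\ge 1$. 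The only real geometric content is the inclusion $D(\overline{C})\subseteq\overline{C'}$, whose sharpness (one has $w_v(C')=t\,w_v(C)$ exactly) is precisely what makes the two contributions sum to $1$; I expect the normal-form reduction of $P_m$ and the handling of closure and degenerate cases to be the most fiddly, though not the most difficult, part.
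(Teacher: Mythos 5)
Your proof is correct and is essentially the paper's argument in different clothing: you dilate $\overline{C}$ about a boundary point $p$ on the supporting hyperplane $\{x_1=b\}$ by the factor $t=1-\frac{w}{w_v(C)}$, show the image lies in the closure of $C\setminus P_m$, and use translation-invariance, homogeneity, monotonicity and closure-invariance of widths to get $w_{v_i}(C\setminus P_m)\geq t\,w_{v_i}(C)$ before summing, which is exactly the paper's inequality ($\ast$) obtained by the same dilation. The only point to patch is the degenerate case where $P_m$ misses $C$ entirely (then your replacement plank $\{a\le x_1\le\beta\}$ is empty), but there $C\setminus P_m=C$ and the hypothesis gives the conclusion immediately, matching the paper's Case 1.
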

\begin{proof}
	Define $X = C \setminus P_1$.  We first show that we may make the following assumptions:
	\begin{enumerate}
		\item	$w_{v_i}(C) > 0$ for all $1 \leq i \leq k$.
		\item	$X$ is not empty.
		\item	$w_{v_i}(X) > 0$ for all $1 \leq i \leq k$.
	\end{enumerate}
	If $w_{v_i}(C) = 0$ for some $i$, then $\frac{w_i}{w_{v_i}(C)} = +\infty > 1$, completing the proof.
	If $X$ is empty, then $\frac{w_1}{w_{v_1}(C)} \geq 1$, finishing the proof.
	If $w_{v_i}(X) = 0$ for some $i$, then since $X$ contains a point which is not in $P_1$, since $P_1$ is closed, and since $C$ is convex,
	$w_{v_i}(C) = 0$, contradicting the first assumption.

	Suppose $X$ and $C$ satisfy all of the above assumptions.  We will prove that, for every $v \in G(n,1)$,
	\begin{equation}
		\label{eqn:main_inequality}
		\frac{w_v(X)}{w_v(C)} \geq \frac{w_{v_1}(X)}{w_{v_1}(C)} \geq 1 - \frac{w_1}{w_{v_1}(C)}. \tag{$\ast$}
	\end{equation}
	The second inequality is immediate, since $w_{v_1}(X) \geq w_{v_1(C)} - w_1$.  We prove the first inequality, beginning with the statement of some elementary facts about the
	widths of convex sets.  Here, $v$ is any element of $G(n,1)$, and $K_1$, $K_2$, and $K$ are any bounded convex sets in $\RR^n$.
	\begin{enumerate}
		\item	If $K_1$ and $K_2$ are translates of each other, then $w_v(K_1) = w_v(K_2)$.
		\item	If $K_1 = c K_2$ for some $c \in \RR$, then $w_v(K_1) = |c| w_v(K_2)$.
		\item	If $K_1 \subset K_2$, then $w_v(K_1) \leq w_v(K_2)$.
		\item	$w_v(K) = w_v(\overline{K})$
	\end{enumerate}

	Our proof proceeds by considering two cases based on how $w_{v_1}(X)$ compares to $w_{v_1}(C)$.

	\noindent {\bf Case 1:} If $w_{v_1}(X) = w_{v_1}(C)$, then $\overline{X} = \overline{C}$, and so from fact $(4)$ we have that $w_v(X) = w_v(C)$ for all $v \in G(n,1)$.  As a result,
	$\frac{w_v(X)}{w_v(C)} \geq 1 = \frac{w_{v_1}(X)}{w_{v_1}(C)}.$
	
	\noindent {\bf Case 2:} If $w_{v_1}(C) > w_{v_1}(X)$, then one of the supporting hyperplanes of $X$ in the direction $v_1$
	lies in $P_1$, and the other lies outside of it.  Let $H_1$ be the first hyperplane, and let $H_2$ be the second; we have that $H_1 \cap P_1 = \varnothing$, and that $H_2 \subset P_1$. Since $H_1$ and $H_2$ are supporting
	hyperplanes, there exists a point $p \in H_1 \cap \partial C$.  From fact $(1)$, we may assume that $p$ is the origin.
	Let $P^*$ be the plank between $H_1$ and $H_2$.  From the definition of $X$, and the above assumptions about $H_1$ and $H_2$, $$P^* \cap \overline{C} = \overline{X}.$$  In particular,
	if $V$ is the unit vector perpendicular to $H_1$ which points into $P^*$, then $x \in \overline{X}$ if and only if $0 \leq V \cdot x \leq w_{v_1}(X)$.

	Let $\rho = \frac{w_{v_1}(X)}{w_{v_1}(C)}$, and consider $\rho C$.  We claim that $\rho C \subset \overline{X}$.  From the above inequality, we need only show that
	$$ 0 \leq V \cdot x \leq w_{v_1}(X)$$
	for every $x \in \rho C$.  For every such $x$, there is a $y \in C$ such that $x = \rho y$.  We then have that
	$$ 0 \leq y \cdot V \leq w_{v_1}(C),$$
	and so
	$$ 0 = 0 \rho \leq x \cdot V \leq \rho w_{v_1}(C) = w_{v_1}(X).$$
	Since $\rho C \subset \overline{X}$, by facts (2), (3), and (4), $\rho w_v(C) \leq w_v(X)$ for every $v \in G(n,1)$.  We obtain our result by rearranging this inequality.
	This proof is illustrated in Figure \ref*{fig:dilate}.
	
	\begin{figure}[h]
		\centering
		\includegraphics[width=1.00\textwidth]{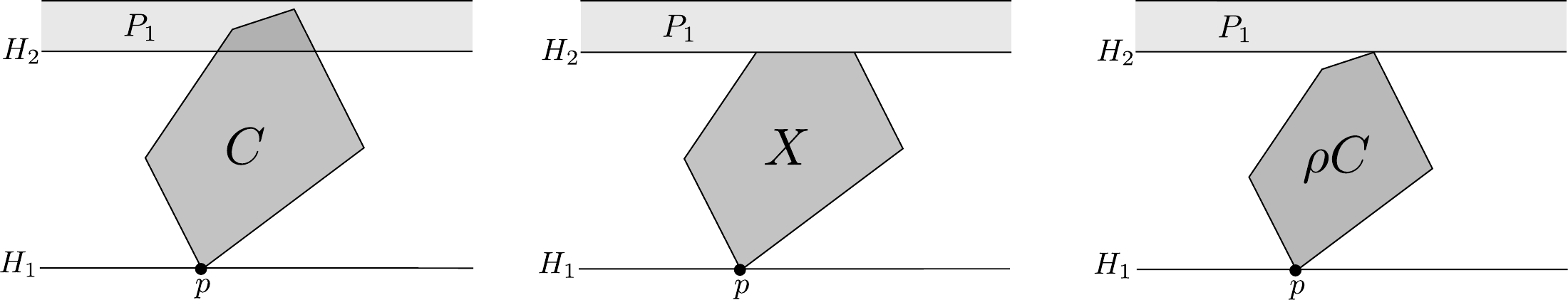}
		\caption{The homothetic image $\rho C$ of the convex set $C$ is contained in the closure of $X = C \setminus P_1$.}
		\label{fig:dilate}
	\end{figure}
	
	Now, suppose that Conjecture \ref{conj:affine_invariant} holds for $X$.  We then have that
	$$ \sum_{i=2}^k \frac{w_i}{w_{v_i}(X)} \geq 1, $$
	and so by multiplying both sides by $\frac{w_{v_1}(X)}{w_{v_1}(C)}$ and by using ($\ast$),
	$$ \sum_{i=2}^k \frac{w_i}{w_{v_i}(C)} \geq \frac{w_{v_1}(X)}{w_{v_1}(C)}. $$
	Adding the contribution from $P_1$ to both sides and using ($\ast$) again, we have that
	\[
		 \sum_{i=1}^k \frac{w_i}{w_{v_i}(C)} \geq \frac{w_1}{w_{v_1}(C)} + \frac{w_{v_1}(X)}{w_{v_1}(C)}  \geq 1. \qedhere
	\]
\end{proof}

\begin{rem}\label{rem:Hunter}
	We mention a noteworthy cover of a convex set by planks which cannot be ordered as in Theorem \ref{thm:main}.
	As Hunter observed in \cite{hunter},
	there is a configuration of three planks $P_1, P_2$, and $P_3$ which cover the equilateral triangle $T$ of side length $1$ such that no plank's removal results in a convex set, and such that
	$$ \frac{w_1}{w_{v_1}(T)} + \frac{w_2}{w_{v_2}(T)} + \frac{w_3}{w_{v_3}(T)} = 1.$$
	The configuration is as follows:  
	Suppose that the barycenter of the equilateral triangle is the origin, and that one edge is parallel to the $x$-axis.  
	Each of our three planks has width $1/3$, and they are centered on the lines passing through the origin with tangent vectors $(0,1)$, $(\sqrt{3},-1)$, and $(-\sqrt{3},-1)$.
	In particular, we cannot hope to prove if a configuration of planks covering a convex body has the sum of their relative widths equal to $1$, then they must satisfy the hypotheses of Theorem \ref{thm:main}.
\end{rem}

\section{Proof of Theorem~\ref{thm:main2}}

\begin{proof}
	Suppose that $C$ is a convex, bounded subset of $\RR^n$, and that $P_1, \dots, P_k$ are planks in directions $v_1, \dots, v_k$ and of widths $w_1, \dots, w_k$
	which cover $C$. 
	
	First, assume that the vectors $v_1, \dots, v_k$ span $\RR^n$ and that $C$ has non-empty interior as a subset of $\RR^n$, meaning it has dimension $n$.
	We wish to show that the sum of the relative widths of $C$ is at least $\frac{1}{n}$.
	
	Recall that the John ellipsoid for a given $d$-dimensional convex set $K$ is the unique ellipsoid contained in $K$ with maximal $d$-dimensional volume (see \cite{john}).  
	In addition, if the John ellipsoid for $K$ is $B^d_1$, the unit ball centered at the origin in $\RR^d$, then $K$ is contained in $B^d_d$, the ball of radius $d$ centered at the origin in $\RR^d$.
	
	Now, since the sum of relative widths remains invariant under affine transformations, we can apply such a transformation to $\mathbb{R}^n$ so that the image of $C$ has the unit ball $B^n_1$ as its inscribed John ellipsoid.
	Thus, the image of $C$ under this affine transformation is contained in the ball $B^n_n$ of radius $n$.
	Because applying this affine transformation does not affect the sum of the relative widths, we will use ``$C,P_1,\dots,P_k$'' to refer to their respective images under this transformation.
	
	Observe that, since the planks $P_1,\dots,P_k$ cover $C$, they also cover $B^n_1$.
	By Bang's solution to the original Tarski plank problem \cite{bang}, it follows that the total width of all the planks, $w_1+\cdots+ w_k$, is at least $2$ (twice the radius of the ball).
	Because the ball $B^n_n$ contains $C$, each width $w_{v_i}(C)$ is at most $2n$.
	Therefore, 
	\[
	\sum_{i=1}^k \frac{w_i}{w_{v_i}(C)} \geq \frac{1}{2n} \sum_{i=1}^k w_i \geq \frac{1}{n}.
	\]
	
	
	Now assume that the directions $v_1, \dots, v_k$ do not span $\RR^n$ or that $C$ has empty interior in $\RR^n$.
	Define $V$ to be the span of $v_1,\dots,v_k$, and define $\pi_V:\RR^n\to V$ to be the orthogonal projection onto $V$. 
	Notice because $C$ is convex in $\RR^n$, we have that $\pi_V(C)$ is convex in $V$.
	Furthermore, the projections $\pi_V(P_1), \dots , \pi_V(P_k)$ are planks in $V$ that cover $\pi_V(C)$.
	Given $i\in\{1,\dots,k\}$, the plank $\pi_V(P_i)$ has width and direction equal to that of $P_i$, namely $w_i$ and $v_i$, respectively.
	Also, the relative widths of $\pi_V(C)$ in $V$ are equal to those of $C$ in $\RR^n$, meaning $w_{v_i}(\pi_V(C)) = w_{v_i}(C)$ for each $i$.
	
	Define $d$ to be the dimension of $\pi_V(C)$.
	In particular, because $\pi_V(C)$ is convex in $V$, there exists a unique $d$-dimensional affine subspace $A\subseteq V\subseteq \RR^n$ containing $\pi_V(C)$ in which $\pi_V(C)$ has non-empty interior.
	Let $C',P_1',\dots,P_k'$ denote the intersections of $\pi_V(C), \pi_V(P_1),\dots,\pi_V(P_k)$ with $A$.
	Then $w_{v_i}(C') = w_{v_i}(\pi_V(C)) = w_{v_i}(C)$.
	As in the proof of Proposition \ref{prop:removal}, we may assume the relative width $w_{v_i}(C') > 0$ for all $i$.
	It follows that each direction $v_i$ is not orthogonal to $A$, and hence $P_1',\dots,P_k'$ are planks in $A$.
	For each $i$, define $v_i'$ and $w_i'$ to be the direction and width of $P_i'$ in $A$, respectively.
	By a similar triangles argument, it is elementary to show that the relative widths of $C'$ with respect to $\pi_V(P_i)$ and with respect to $P_i'$ are equal.
	In other words, for each $i$, 
	\[
		\frac{w_i}{w_{v_i}(C')} = \frac{w_i'}{w_{v_i'}(C')}.
	\]
	
	Finally, because $v_1',\dots,v_k'$ span $A$ and $C'$ has non-empty interior in $A$, it follows from the argument above that 
	\[
		\sum_{i=1}^k \frac{w_i}{w_{v_i}(C)} = \sum_{i=1}^k \frac{w_i'}{w_{v_i'}(C')} \geq \frac{1}{d} = \frac{1}{\dim \pi_V(C)}. \qedhere
	\]
\end{proof}

\begin{rem}
	\label{rem:improvement}
	Instead of using the John ellipsoid, one can consider the set $\hat{C} = (C-g) \cap (g-C)$, where $g$ denotes the centroid (center of mass) of $C$.
	$\hat{C}$ is convex, symmetric about the origin, and can be inscribed in $C$ via a translation.
	In particular, Ball's result from \cite{ball} can be applied to the sum of relative widths for $\hat{C}$.
	On the other hand, by a classical theorem of Minkowski and Radon, for every direction $v_i$, the width $w_{v_i}(\hat{C})$ is at least $\frac{2}{1+d} w_{v_i}(C)$, where $d$ denotes the dimension of $C$.
	From these observations, it follows that the sum of relative widths of $C$ is at least $\frac{2}{1+d}$, which improves upon the bound in Theorem \ref{thm:main2}.
	We thank an anonymous referee for sharing this observation with us.
\end{rem}

\begin{rem}
	It appears that the proof of Theorem \ref{thm:main2} can also be applied in the context of the \textit{fractional plank problem}; see \cite{Aharoni_2002} for a description of this problem.
\end{rem}

\bigskip
\noindent {\bf Acknowledgments} \quad The authors would like to thank Marianna C\"{o}rnyei and Almut Burchard for useful conversations with regard to this problem and anonymous referees for their feedback.  The first author was supported in part by NSF grant DMS-1906543, and the second author was supported in part by NSF award DMS-2202826.

\bibliographystyle{amsplain}
\bibliography{bibliography}

\end{document}